\newtheorem{theorem}{\sc Theorem}[section]
\newtheorem{lem}[theorem]{\sc Lemma}
\newtheorem{cor}[theorem]{\sc Corollary}
\newtheorem{rem}[theorem]{\sc Remark}
\newtheorem*{thmA}{Theorem A}
\newtheorem*{thmB}{Theorem B}
\newtheorem*{thmC}{Theorem C}
\newtheorem*{thmD}{Theorem D}
\title[Weak Commutativity]{Finiteness conditions for the weak commutativity construction}
\author[Bastos]{R. Bastos}
\address{ Departamento de Matem\'atica, Universidade de Bras\'ilia,
Brasilia-DF, 70910-900 Brazil }
\email{(Bastos) bastos@mat.unb.br}
\author[Lima]{B. Lima}
\address{ Departamento de \'Areas Acad\^emicas, Instituto Federal de Goi\'as, \'Aguas Lindas-GO, 72910-733 Brazil}
\email{(Lima) bruno.cesar@ifg.edu.br}
\author[Nunes]{R. Nunes}
\address{ Departamento de Matem\'atica, Universidade Federal de Goi\'as,
Goi\^ania-GO, 74690-900 Brazil }
\email{(Nunes) ricardo@ufg.br}
\subjclass[2010]{20E34, 20E25}
\keywords{Finiteness conditions; weak commutativity}
\begin{document}

\maketitle

\begin{abstract}
\hspace{0,08cm} The operator, $\chi $, of weak commutativity between isomorphic groups $G$ and $G^{\varphi }$ was introduced by Sidki as \begin{equation*} \chi (G)=\left\langle G \cup G^{\varphi }\mid \lbrack g,g^{\varphi }]=1\,\forall \,g\in G\right\rangle \text{.} \end{equation*} 
It is known that the operator $\chi $ preserves group properties such as finiteness, solubility and also nilpotency for finitely generated groups. We prove that if $G$ is a locally finite group with $exp(G)=n$, then $\chi(G)$ is locally finite and has finite $n$-bounded exponent. Further, we examine some finiteness criteria for the subgroup $D(G) = \langle [g_1,g_2^{\varphi}] \mid g_i \in G\rangle \leqslant \chi(G)$ in terms of the set $\{[g_1,g_2^{\varphi}] \mid g_i \in G\}$. 
\end{abstract}

\maketitle

\section{Introduction}

Let  $G^{\varphi}$ be
a copy of the group $G$, isomorphic via $\varphi : G \rightarrow
G^{\varphi}$, given by $g \mapsto g^{\varphi}$. The following group construction was introduced
and analyzed in \cite{Sidki} $$ \chi(G) = \langle G \cup G^{\varphi} \mid [g,g^{\varphi}]=1, \forall g \in G \rangle.$$ The weak commutativity group $\chi (G)$ maps onto $G$ by $g\mapsto g$, $g^{\varphi }\mapsto g$
with kernel $L(G)=\left\langle g^{-1}g^{\varphi } \mid g\;\in G\right\rangle $ and
maps onto $G\times G$ by $g\mapsto \left( g,1\right) ,g^{\varphi
}\mapsto \left( 1,g\right) $ with kernel $D(G)=[G,G^{\varphi }]$. It is an
important fact that $L(G)$ and $D(G)$ commute. Define $T(G)$ to be the
subgroup of $G\times G\times G$ generated by $\{(g,g,1),(1,g,g)\mid g\in G\}$. Then $\chi (G)$ maps onto $T(G)$ by $g\mapsto \left( g,g,1\right)$, $g^{\varphi }\mapsto \left( 1,g,g\right) $, with kernel $W(G)=L(G)\cap D(G)$, an abelian group. In particular, the quotient $\chi(G)/W(G)$ is isomorphic to a subgroup of $G \times G \times G$. A further normal subgroup of $\chi (G)$ is $R(G)={%
[G,L(G),G^{\varphi }]}$. The quotient $W(G)/R(H)$ is isomorphic to
the Schur Multiplier $M(G)$ (cf. \cite{Roc82}).

In \cite{Sidki}, Sidki proved that if $G$ is finite, then so is $\chi(G)$. Other finiteness conditions for the weak commutativity group were considered in  \cite{GRS,LO,Roc82}. An interesting construction related to the group $\chi(G)$ was introduced by Rocco \cite{NR1}. More precisely, in \cite{NR1}, Rocco defined the group $\nu(G)$ as 
$$\begin{array}{ll} {\nu}(G) =  \langle
G \cup G^{\varphi}\ |  &
[g_1,{g_2}^{\varphi}]^{g_3}=[{g_1}^{g_3},({g_2}^{g_3})^{\varphi}] = 
[g_1,{g_2}^{\varphi}]^{g^{\varphi}_3}, \ g_i \in G
\rangle . \end{array}$$

It is a well known fact (see \cite[Proposition 2.6]{NR1}) that the subgroup
$[G, G^{\varphi}]$ of $\nu(G)$ is canonically isomorphic with the {\em non-abelian
tensor square} $G \otimes G$, as defined by Brown and Loday in their seminal paper \cite{BL}, the isomorphism being induced by $g \otimes h \mapsto
[g, h^{\varphi}]$. Other interesting homological functors appeared as sections of the non-abelian tensor square (cf. \cite[Proposition 4.10]{BL}). Moreover, in \cite{NR1,NR2}, it was proved that the constructions $\chi(G)$ and $\nu(G)$ have isomorphic quotients. More precisely, $$\dfrac{\nu(G)}{\Delta(G)} \cong \dfrac{\chi(G)}{R(G)},$$ 
where $\Delta(G) = \langle [g,g^{\varphi}] \mid g \in G\rangle \leqslant \nu(G)$. See \cite[Remark 2]{NR1} and \cite[Remark 4]{NR2}) for more details. In order to avoid confusion with other mentioned constructions, unless otherwise indicated, the only subgroups considered are related to the construction $\chi(G)$. We consider the following sets/subgroups of $\chi(G)$: $T_{\chi}(G) = \{[g,h^{\varphi}] \mid g,h \in G\}$, $D(G) = [G,G^{\varphi}]$, $L(G) = \langle g^{-1}g^{\varphi} \mid g \in G\rangle$ and $R(G) = [G,L(G),G^{\varphi}]$. 

A celebrated result due to Zelmanov \cite{ze1,ze2,ze16} is the positive solution of the Restricted Burnside Problem (RBP for short): every $m$-generator finite group of exponent $n$ has $\{m,n\}$-bounded order. As usual, the expression ``$\{a,b,...\}$-bounded'' means ``bounded from above by some function which depends only on parameters $a,b,...$''. In \cite{M}, Moravec proved that if $G$ is locally finite with exponent $\exp(G)=e$, then the group $\nu(G)$ is locally finite with $e$-bounded exponent. The group $G$ is said to have a certain property locally if each finitely generated subgroup of $G$ has this property. We establish the following related result.

\begin{thmA}
Let $n$ be a positive integer. Let $G$ be a locally finite group with $\exp(G)=n$. Then the group $\chi(G)$ is locally finite with exponent $\exp(\chi(G))$ finite and  $n$-bounded.
\end{thmA}

In \cite{BNR,BR1,BR2,BRV}, the authors study the influence of the set of all tensors $T_{\otimes}(G) = \{g \otimes h \mid g,h \in G\} \subseteq \nu(G)$ on the structure of the non-abelian tensor square $G \otimes G$ and related constructions. For instance, in \cite{BNR}, it was proved that the set of tensors $\{g \otimes h \mid g,h \in G\}$ is finite if and only if the non-abelian tensor square $G \otimes G$ is finite. In the same direction, in \cite{BNR}, it was shown that if $G$ is a finitely generated residually finite group such that the exponent of the non-abelian tensor square  $\exp(G \otimes G)$ is finite, then $G$ is finite.  We can extend these results to the context of the weak commutativity construction.    

\begin{thmB}
Let $G$ be a group. The set $T_{\chi}(G)$ is finite if and only if $D(G)$ is finite.
\end{thmB}

An immediate consequence of \cite{shu05}, is that if $G$ is a residually finite group satisfying some non-trivial identity in which all commutators $[x,y]$ have order dividing some $p$-power order, for some fixed prime $p$, then the derived subgroup $G'$ is locally finite. We obtain the following related result.  

\begin{thmC}
Let $p$ be a prime. Let $G$ be a residually finite group satisfying some non-trivial identity. Assume that for every $x,y \in G$ there exists a $p$-power $q=q(x,y)$ such that $[x,y^{\varphi}]^q=1$. Then the subgroup $D(G)$ is locally finite.
\end{thmC}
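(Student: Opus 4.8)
The plan is to reduce the problem to a known result about residually finite groups satisfying an identity, applied not to $G$ itself but to a suitable overgroup, and then transfer local finiteness of a derived subgroup back to $D(G)$. First I would recall the standard fact (used implicitly in the discussion of \cite{NR1,NR2} above) that $D(G)=[G,G^{\varphi}]$ sits inside $\chi(G)$ as a normal subgroup with $\chi(G)/D(G)\cong G\times G$, and that $\chi(G)$ is generated by the two isomorphic copies $G$ and $G^{\varphi}$; in particular $D(G)=[\chi(G),\chi(G)]$ modulo the contribution of $G'$ and $(G')^{\varphi}$, so that the derived subgroup of $\chi(G)$ differs from $D(G)$ only by copies of $G'$. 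Since $G$ itself satisfies a non-trivial identity and is residually finite, $G'$ is already well understood; the real content is the behaviour of the ``mixed'' commutators $[x,y^{\varphi}]$. So the target reduces to showing that the normal closure of $T_{\chi}(G)$ — equivalently $D(G)$ — is locally finite.

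The key step is to produce a residually finite overgroup to which one can apply \cite{shu05}. I would argue that if $G$ is residually finite then so is $\chi(G)$, at least after passing to the relevant section: indeed, by a result of Sidki (and its refinements in \cite{GRS,LO,Roc82}), the operator $\chi$ preserves residual finiteness for reasonable classes, and in any case each finite quotient $G\twoheadrightarrow G/N$ induces $\chi(G)\twoheadrightarrow\chi(G/N)$ with $\chi(G/N)$ finite by Sidki's theorem, so $\chi(G)$ is residually finite. Next, $\chi(G)$ satisfies a non-trivial identity: if $w(x_1,\dots,x_k)\equiv 1$ holds in $G$, then a suitable word built from $w$ (applied to the two generating copies, exploiting $[g,g^{\varphi}]=1$) is an identity of $\chi(G)$ — here one must be slightly careful, and the cleanest route is to observe that $\chi(G)$ is an extension of $W(G)$ (abelian, hence satisfying an identity) by a subgroup of $G\times G\times G$ (which satisfies the identity of $G$), so $\chi(G)$ lies in a product variety and therefore satisfies a non-trivial identity. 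Finally, the hypothesis $[x,y^{\varphi}]^{q(x,y)}=1$ with $q$ a $p$-power says precisely that every generator of $D(G)$ — and one checks, every commutator of the form $[\chi(G),\chi(G)]$ modulo the well-behaved part coming from $G'$ — has $p$-power order. Applying \cite{shu05} to the residually finite group $\chi(G)$ (which satisfies a non-trivial identity) yields that $[\chi(G),\chi(G)]$ is locally finite, and hence $D(G)$, being contained in it, is locally finite.

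I expect the main obstacle to be verifying the precise hypothesis of \cite{shu05}: that result requires \emph{all} commutators $[x,y]$ in the ambient group to have $p$-power order, whereas here we are only told this for the \emph{mixed} commutators $[x,y^{\varphi}]$, not for $[x,y]$ with $x,y\in G$. To bridge this gap I would first dispose of $G'$ separately: $G$ is residually finite and satisfies an identity, so by Zelmanov's solution to the RBP (via \cite{shu05} or directly) one controls $G'$, and in fact one can reduce to the case where $G$ is already a $p$-group of finite exponent or handle $G'$ by a direct Burnside-type argument, after which every commutator in $\chi(G)$ — mixed or not — has $p$-power order. Alternatively, and perhaps more cleanly, one works inside the section $\chi(G)/(G'(G')^{\varphi})$, where the derived subgroup is generated by mixed commutators alone, applies \cite{shu05} there, and then pulls back using that $G'$ is locally finite. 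A secondary technical point is ensuring that the exponents $q(x,y)$, though not uniformly bounded a priori, do become bounded on finitely generated subgroups once local finiteness is in hand — but this is exactly the kind of bootstrapping that \cite{shu05} is designed to accommodate, so it should not cause serious trouble.
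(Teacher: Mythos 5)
Your proposal has genuine gaps at both of its load-bearing points. First, the claim that $\chi(G)$ is residually finite because every finite quotient $G\twoheadrightarrow G/N$ induces $\chi(G)\twoheadrightarrow \chi(G/N)$ with $\chi(G/N)$ finite is a non sequitur: residual finiteness requires the kernels of these maps to intersect trivially, and that is precisely the delicate point. Elements of $W(G)$ (whose quotient $W(G)/R(G)$ is the Schur multiplier $M(G)$) may well die in every quotient of the form $\chi(G/N)$, and no result in the paper or in \cite{GRS,LO,Roc82} asserts that $\chi$ preserves residual finiteness; the analogous question for $\nu(G)$ is exactly what makes \cite{BR1,BR2} nontrivial. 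Second, even granting residual finiteness and an identity for $\chi(G)$ (your product-variety argument for the identity is fine), Lemma \ref{lem.shu} needs a \emph{normal, commutator-closed} generating set of $p$-elements. The hypothesis only controls the mixed commutators $[x,y^{\varphi}]$ (and, via the retraction onto $G$, the commutators inside each copy); it says nothing about arbitrary commutators $[u,v]$ with $u,v\in\chi(G)$, and in $\chi(G)$ (unlike $\nu(G)$) the set $T_{\chi}(G)$ is not given as closed under conjugation or under taking commutators. Your two proposed repairs do not close this: nothing in the hypotheses lets you ``reduce to the case where $G$ is a $p$-group of finite exponent'' (the orders of elements of $G$ are completely unconstrained), and passing to the section $\chi(G)/\langle G',(G')^{\varphi}\rangle^{\chi(G)}$ destroys residual finiteness, since quotients of residually finite groups need not be residually finite, so \cite{shu05} cannot be applied there either.

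For comparison, the paper never needs any residual finiteness property of $\chi(G)$: it applies Shumyatsky's lemma only to $G$ itself (the hypothesis $[x,y^{\varphi}]^{q}=1$ pushes down through $\chi(G)\twoheadrightarrow G$ to give every commutator $[x,y]$ of $G$ $p$-power order, so $G'$ is locally finite), and then transfers this to $D(G)$ using the central extension $W(G)\hookrightarrow D(G)\twoheadrightarrow G'$: a finitely generated subgroup of $D(G)$ lies in some $N=\langle [a_1,b_1^{\varphi}],\ldots,[a_s,b_s^{\varphi}]\rangle$, whose image in $G'$ is finite, so $N$ is central-by-finite; Schur's theorem makes $N'$ finite, and since the generators $[a_i,b_i^{\varphi}]$ have finite order, $N$ itself is finite. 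If you want to salvage your outline, this second step (centrality of $W(G)$ in $D(G)$ plus $D(G)/W(G)\cong G'$) is the mechanism you are missing.
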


A natural question arising in the context of Theorem C is whether the
theorem remains valid with $q$ allowed to be an arbitrary natural number rather than
$p$-power. This is related to the conjecture that if $G$ is a residually finite group and every commutator $[x,y]$ has order dividing a fixed number $n$, then the derived subgroup $G'$ is locally finite (cf. \cite{shu00}).  

Given a group $G$, an element $g\in G$ is called a (left) Engel element if for any $x\in G$ there exists a positive integer $n=n(x,g)$ such that $[x,_n g]=1$, where the commutator $[x,_n g]$ is defined inductively by the rules $$[x,_1 g]=[x,g]=x^{-1}g^{-1}xg\quad {\rm and,\, for}\; n\geq 2,\quad [x,_n g]=[[x,_{n-1} g],g].$$
If $n$ can be chosen independently of $x$, then $g$ is called a (left) $n$-Engel element, or more generally a bounded (left) Engel element. The group $G$ is an Engel group (resp. an $n$-Engel group) if all its elements are Engel (resp. $n$-Engel). Following Zelmanov's solution of the RBP \cite{ze1,ze2,ze16}, Wilson proved that every $n$-Engel residually finite group is locally nilpotent \cite{wi91}. Later, Shumyatsky  shown that if $G$ is a  residually finite group in which all commutators $[x,y]$ are $n$-Engel, then the derived subgroup $G'$ is locally nilpotent \cite{shu00}. In the context of the weak commutativity construction, Gupta, Rocco and Sidki proved that if $G$ is locally nilpotent, then so is $\chi(G)$ (cf. \cite{GRS}). We obtain the following related result.

\begin{thmD}
Let $G$ be a residually finite group satisfying some non-trivial identity. Assume that for every $x,y \in G$ there exists a positive integer $n=n(x,y)$ such that $[x,y^{\varphi}]$ is $n$-Engel in $\chi(G)$. Then $D(G)$ is locally nilpotent.  
\end{thmD}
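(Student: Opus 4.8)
The plan is to push the problem from $\chi(G)$ down to $G$, and there to invoke the residually‑finite Engel machinery in the circle of \cite{shu00}. First I would record a structural reduction. Since $\chi(G)/W(G)\cong T(G)\leqslant G\times G\times G$ and, under this isomorphism, $D(G)=[G,G^{\varphi}]$ maps onto the ``middle copy'' $1\times G'\times 1$ while $W(G)=L(G)\cap D(G)\leqslant D(G)$, one gets a central extension
\begin{equation*}
1\longrightarrow W(G)\longrightarrow D(G)\longrightarrow G'\longrightarrow 1,
\end{equation*}
the centrality of $W(G)$ in $D(G)$ coming from the fact that $L(G)$ and $D(G)$ commute (so $[W(G),D(G)]\leqslant[L(G),D(G)]=1$). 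Now a central extension of a locally nilpotent group by an abelian group is locally nilpotent: if $H\leqslant D(G)$ is finitely generated, then $HW(G)/W(G)$ is a finitely generated, hence nilpotent, subgroup of $G'$, of class $c$ say, and $H\cap W(G)\leqslant Z(H)$ forces $\gamma_{c+2}(H)=1$. Thus it suffices to prove that $G'$ is locally nilpotent.

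Next I would transport the hypothesis through the projection $\chi(G)\to G$ with kernel $L(G)$, which sends $[x,y^{\varphi}]$ to $[x,y]$. Since homomorphic images of $n$‑Engel elements are $n$‑Engel, it follows that for all $x,y\in G$ the commutator $[x,y]$ is a left $n(x,y)$‑Engel element of $G$; and left Engel‑ness is inherited by subgroups. Because a finitely generated subgroup of $G'$ lies in $K'$ for a suitable finitely generated $K\leqslant G$, and local nilpotency passes to subgroups, it is enough to show $K'$ is locally nilpotent for every finitely generated $K\leqslant G$. Such a $K$ is finitely generated, residually finite, satisfies the same non‑trivial identity as $G$, and every commutator of $K$ is a bounded left Engel element of $K$.

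In the finitely generated case, writing $K=\langle a_1,\dots,a_r\rangle$, the subgroup $K'$ is the normal closure in $K$ of the finite set $\{[a_i,a_j]\}$; setting $n=\max_{i,j}n(a_i,a_j)$ and using that conjugates of $n$‑Engel elements are again $n$‑Engel, $K'$ is generated by a normal set of $n$‑Engel elements. At this point I would invoke the residually‑finite Engel results underlying \cite{shu00} — which themselves rest on Wilson's theorem \cite{wi91} and Zelmanov's solution of the restricted Burnside problem \cite{ze1,ze2,ze16} — to the effect that, in a residually finite group satisfying a non‑trivial identity, a normal set of bounded left Engel elements generates a locally nilpotent subgroup. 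This gives $K'$ locally nilpotent, hence $G'$ locally nilpotent, and by the first reduction $D(G)$ is locally nilpotent.

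The hard part will be this last step. The Engel bounds $n(x,y)$ are only pointwise, so the form of Shumyatsky's commutator–Engel theorem stated with a single uniform $n$ does not apply verbatim; the passage to finitely generated $K$ is what manufactures a uniform bound on a normal generating set of $K'$, but one still has to control the nilpotency class of the finite quotients of $K'$ from this data. This is precisely where the hypothesis that $G$ satisfies a non‑trivial identity is indispensable: it is what permits the Lie‑theoretic input of Zelmanov (ad‑nilpotent generators together with a polynomial identity forcing nilpotency) to be run, converting the ``residually finite with a bounded‑Engel normal generating set'' situation into local nilpotency of $K'$. Making this non‑uniform version precise, rather than simply assuming one $n$ works for all commutators, is the main technical content of the argument.
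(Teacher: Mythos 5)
Your two reductions are exactly the ones the paper makes: the central extension $1\to W(G)\to D(G)\to G'\to 1$ (centrality coming from $[L(G),D(G)]=1$), which reduces the problem to local nilpotency of $G'$, and the observation that the retraction $\chi(G)\to G$ sends $[x,y^{\varphi}]$ to $[x,y]$, so every commutator of $G$ is a bounded left Engel element. Up to this point the argument is correct and coincides with the paper's.

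The gap is the final step, which you yourself flag as ``the main technical content'' and leave unproved: you appeal to unspecified ``residually-finite Engel results underlying \cite{shu00}'' to the effect that a normal set of $n$-Engel elements in a residually finite group satisfying an identity generates a locally nilpotent subgroup, but no such statement is proved or precisely cited, and this is precisely the nontrivial Zelmanov-type input the theorem rests on. The paper closes this gap by quoting \cite[Theorem A]{BMTT} (Lemma \ref{lem.BMTT}): a residually finite group satisfying a non-trivial identity which is generated by a \emph{commutator-closed} set of bounded Engel elements is locally nilpotent. That lemma applies directly to $G'$ with $X$ the set of all commutators of $G$ (commutator-closed, since a commutator of two commutators is again a commutator of elements of $G$, and each element of $X$ is bounded Engel by your second reduction); crucially, it tolerates the pointwise bounds $n(x,y)$, so no passage to finitely generated $K$ and no uniformization is needed. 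Note also that your manufactured generating set of $K'$, the conjugates of the finitely many $[a_i,a_j]$, is normal but not commutator-closed, so even the BMTT lemma would not apply verbatim to it; to use that lemma you would have to revert to the full set of commutators of $K$, at which point your uniform bound plays no role. So the route you sketch does not connect to a citable theorem as it stands, and supplying the missing criterion (or citing \cite{BMTT}) is what is required to complete the proof.
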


{\noindent} The paper is organized as follows. In the next section we proof Theorems A and B. Section 3 contains the proofs of Theorems C and D.  The proofs of the main results rely on Zelmanov's techniques that led to the positive solution of the RBP \cite{ze1,ze2,ze16} (see \cite{shu00} for a survey).    

\section{Local finiteness criteria for the weak commutativity construction}

Schur \cite[10.1.4]{Rob} shown that if $G$ is a group whose center $Z(G)$ has finite index $n$, then the order of the derived subgroup $G'$ is finite and the exponent $\exp(G')$ divides $n$. In particular, the group $G$ is a BFC-group. Neumann \cite[14.5.11]{Rob} improved Schur's theorem in a certain way, showing that the group $G$ is a BFC-group if and only if the derived subgroup $G'$ is finite, and that this occurs if and only if $G$ contains only finitely many commutators. Recall that a group $G$ is called a BFC-group if there is a positive integer $d$ such that no element of $G$ has more than $d$ conjugates. An immediate consequence of Schur's Theorem that if $G$ is a group in  which the quotient  $G/Z(G)$ is locally finite, then the derived subgroup $G'$ is also locally finite. In \cite{Mann},  Mann shows the following quantitative version of the above result. 

\begin{lem} \label{lem.Mann} (Mann, \cite[Theorem 1]{Mann})
Let $n$ be a positive integer. Let $G$ be a group in which $G/Z(G)$ is locally finite with $\exp(G/Z(G))=n$. Then the derived subgroup $G'$ is locally finite and $\exp(G')$ is finite with $n$-bounded exponent. 
\end{lem}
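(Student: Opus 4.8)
The plan is to treat the two assertions of the lemma separately, passing in each case to finitely generated subgroups where Schur's theorem and Zelmanov's solution of the RBP become available.

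For the local finiteness of $G'$ there is essentially nothing new to do: this is the ``immediate consequence of Schur's theorem'' recalled just above. Concretely, any finitely generated subgroup of $G'$ lies in $[H,H]$ for some finitely generated $H\leqslant G$, since each of its generators is a product of finitely many commutators of elements of $G$; since $Z(G)\cap H\leqslant Z(H)$, the group $H/Z(H)$ is a homomorphic image of the subgroup $HZ(G)/Z(G)$ of $G/Z(G)$, hence finitely generated, locally finite of exponent dividing $n$, and so finite; Schur's theorem then forces $[H,H]$ to be finite, and therefore $G'$ is locally finite.

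The content of the lemma is that $\exp(G')$ is finite and $n$-bounded, and this is where Zelmanov's theorem enters. Here I would first isolate two facts that expose the role of $n$. (i) For every $x,y\in G$ the $2$-generated subgroup $K=\langle x,y\rangle$ has $KZ(G)/Z(G)$ a $2$-generated subgroup of $G/Z(G)$, hence $K/Z(K)$ is finite of order at most the order $B(2,n)$ of the free restricted Burnside group of rank $2$ and exponent $n$ (finite by \cite{ze1,ze2,ze16}); so by Schur's theorem every commutator $[x,y]\in G$ has $n$-bounded order. (ii) Every lower central factor satisfies $\exp(\gamma_i(G)/\gamma_{i+1}(G))\mid n$: for $u\in\gamma_{i-1}(G)$, $y\in G$ with $i\geqslant 2$ one has $u^n\in Z(G)$, so $1=[u^n,y]=\prod_{j=0}^{n-1}[u,y]^{u^j}\equiv[u,y]^n\pmod{\gamma_{i+1}(G)}$ (using $[\gamma_i(G),\gamma_{i-1}(G)]\leqslant\gamma_{i+1}(G)$), and $\gamma_i(G)/\gamma_{i+1}(G)$ is abelian, generated by such commutators. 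Combining (ii) with the observations that $G'\cap Z(G)\leqslant Z(G')$ (being centralised by $G\geqslant G'$) and that $G'/(G'\cap Z(G))$ embeds into $G/Z(G)$, one gets $\exp(G'/Z(G'))\mid n$, so $\exp(G')$ divides an $n$-bounded multiple of $\exp(G'\cap Z(G))$; and iterating (ii) one even finds $z^{n^j}\in\gamma_{j+2}(G)\cap Z(G)$ for $z\in G'\cap Z(G)$, which already settles the bound when, for example, $G$ is residually nilpotent.

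The step I expect to be the main obstacle is the reduction of everything to a bound on $\exp(G'\cap Z(G))$ — the exponent of the ``central part'' of the commutator subgroup — that depends on $n$ \emph{alone}. The difficulty is that an element of $G'\cap Z(G)$ is a priori a product of an unbounded number of commutators, so estimating its order via Schur's theorem inside the (unboundedly generated) subgroup it generates yields only a bound depending on the number of generators involved, not on $n$; removing this dependence, i.e.\ showing $\exp(G'\cap Z(G))$ is genuinely $n$-bounded, is exactly where the full strength of Zelmanov's solution of the RBP is required. Granting this, the reductions above give $\exp(G')$ finite and $n$-bounded, completing the proof.
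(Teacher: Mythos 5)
This lemma is not proved in the paper at all: it is quoted verbatim as Mann's Theorem~1 from \cite{Mann}, so the only meaningful comparison is with Mann's published argument. Measured against the statement itself, your write-up is incomplete. The parts you do prove are correct and standard: local finiteness of $G'$ via Schur's theorem applied to finitely generated subgroups (this is exactly the ``immediate consequence'' the paper records before the lemma); the $n$-bounded order of a single commutator $[x,y]$, obtained from the RBP bound $|K/Z(K)|\leqslant B(2,n)$ for $K=\langle x,y\rangle$ together with the quantitative form of Schur's theorem; and the reduction $\exp(G')\mid n\cdot\exp\bigl(G'\cap Z(G)\bigr)$. But the whole content of Mann's theorem is precisely the step you label ``the main obstacle'' and then grant: showing that $\exp\bigl(G'\cap Z(G)\bigr)$ — equivalently, the order of a product of an \emph{unbounded} number of commutators — is bounded in terms of $n$ alone, with no dependence on the number of generators of the subgroup involved. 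Identifying where the difficulty lies is not the same as resolving it, so as it stands the proposal proves a strictly weaker statement (orders of commutators are $n$-bounded; $G'$ is locally finite) and leaves the lemma's actual conclusion unestablished.

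Two smaller points. First, your fact (ii) is fine for $i\geqslant 2$, but the parenthetical claim that iterating it ``already settles the bound when $G$ is residually nilpotent'' does not hold: from $z^{n^j}\in\gamma_{j+2}(G)$ for all $j$ you get a bound on the order of $z$ only when the lower central series reaches the identity in a bounded number of steps (nilpotency of $n$-bounded class), not from $\bigcap_j\gamma_j(G)=1$, since the exponent $n^j$ grows with the step at which $z$ dies. Second, if you want to complete the argument rather than cite \cite{Mann}, the missing ingredient is exactly a uniform, $n$-bounded control of the central part $G'\cap Z(G)$ (for instance via the fact that it is an epimorphic image of the Schur multiplier of $G/Z(G)$, whose exponent must then be bounded in terms of $n$ using the RBP); some genuinely new input of this kind is unavoidable, and your sketch does not contain it.
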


For the reader's convenience we restate Theorem A. 

\begin{thmA}
Let $n$ be a positive integer. Let $G$ be a locally finite group with $\exp(G)=n$. Then the group $\chi(G)$ is locally finite with exponent $\exp(\chi(G))$ finite and  $n$-bounded.
\end{thmA}

\begin{proof}
It is well know that the quotient $\chi(G)/W(G)$ is isomorphic to a subgroup of $G \times G \times G$. So the groups $\chi(G)/W(G)$ and $L(G)/W(G)$ are locally finite with exponent $n$. Note that $W(G)$ is a central subgroup of $L(G)$. By Lemma \ref{lem.Mann}, we deduce that the derived subgroup  $L(G)'$ is locally finite with $n$-bounded exponent. There is no loss of generality in assuming that $L(G)$ is abelian. Since $L(G)$ is generated by elements of the form $[g,\varphi] = g^{-1}g^{\varphi}$, $g \in G$, it is sufficient to prove that these generators have $n$-bounded orders. Note that $[g^k,\varphi] = [g,\varphi]^k$, for all $k \in \mathbb{Z}$. Let $m$ be the order of the element $g \in G$.  Consequently,  $$1=[g^m,\varphi]=[g,\varphi]^m.$$ In particular, the order of the element $[g,\varphi]$ divides $n$ for every $g \in G$. We deduce that $L(G)$ is locally finite and $\exp(L(G))$ is finite with $n$-bounded exponent, as well. The proof is complete. 
\end{proof}

\begin{rem}
In the above result we use Mann's theorem \cite{Mann}, which relies on the positive solution of the RBP.
\end{rem}

The following result is an immediate consequence of \cite[Theorem C (iii)]{Sidki}.

\begin{lem}
\label{lem.finite.Sidki} Let $G$ be a finite group. Then the weak commutativity group $\chi(G)$ is finite. 
\end{lem}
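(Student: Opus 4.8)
The plan is to obtain this as a direct consequence of Sidki's original analysis of $\chi(G)$ for finite $G$: the finiteness of $\chi(G)$ for finite $G$ is precisely part (iii) of Theorem C in \cite{Sidki}, so at the outermost level the proof is a citation. For completeness I would indicate how the finiteness can be read off from the structural facts recalled in the introduction. First, $\chi(G)/W(G)$ is isomorphic to a subgroup of $G\times G\times G$, hence finite whenever $G$ is finite; so it suffices to prove that the abelian normal subgroup $W(G)$ is finite. Since $W(G)/R(G)\cong M(G)$, the Schur multiplier of $G$, and the Schur multiplier of a finite group is finite, the problem reduces to showing that $R(G)=[G,L(G),G^{\varphi}]$ is finite.

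Next I would observe that, $G$ being finite, $\chi(G)$ is finitely generated; as $W(G)$ has finite index in $\chi(G)$ (because $\chi(G)/W(G)$ embeds in $G\times G\times G$), the subgroup $W(G)$, and hence its subgroup $R(G)\leqslant W(G)$, is finitely generated. A finitely generated abelian group is finite exactly when it is torsion, so it remains to bound the orders of the (normal) generators $[g,g_1^{-1}g_1^{\varphi},g_2^{\varphi}]$ of $R(G)$ in $\chi(G)$. This last point is the substantive one: it is here that one must exploit the defining relations $[g,g^{\varphi}]=1$ together with the commutator identities in $\chi(G)$ to force these elements into a torsion subgroup — which is exactly the computation carried out in \cite{Sidki}.

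Accordingly, I expect the only genuine obstacle to be the torsionness (equivalently, finiteness) of $R(G)$: the reduction to $W(G)$ via the embedding into $G\times G\times G$ and the further reduction to $R(G)$ via the Schur multiplier are purely formal, whereas finiteness of $R(G)$ really encodes the weak commutativity relations. In practice I would simply quote \cite[Theorem C (iii)]{Sidki} rather than reprove this.
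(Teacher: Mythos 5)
Your proposal is correct and matches the paper, which likewise proves this lemma simply by citing \cite[Theorem C (iii)]{Sidki}. Your supplementary reduction (finiteness of $\chi(G)/W(G)$ via the embedding into $G\times G\times G$, then of $W(G)$ via $W(G)/R(G)\cong M(G)$ and finite generation) is sound, though, as you acknowledge, the remaining torsionness of $R(G)$ is still deferred to Sidki's computation, so the citation carries the real weight in both versions.
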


We are now in a position to prove Theorem B. 

\begin{thmB}
Let $G$ be a group. The set $T_{\chi}(G)$ is finite if and only if $D(G)$ is finite.
\end{thmB}
\begin{proof}
Clearly, if $D(G)$ is finite, then $T_{\chi}(G)$ is finite. So we only need to prove the converse.  

Since $T_{\chi}(G)$ is finite, it follows that the set of all commutators is finite. In particular, the group $G$ is a BFC-group. By Neumann's result \cite[14.5.11]{Rob}, the derived subgroup $G'$ is finite. As $W(G)$ is a central subgroup of $D(G)$ and the quotient group $D(G)/W(G)$ is isomorphic to the derived subgroup $G'$ we have $W(G)$ is a central subgroup of finite index in $D(G)$. Without loss of generality we may assume that $D(G)$ is abelian. By \cite[4.1.13]{Sidki} we have a the following exact sequence
$$ [G',G^{\varphi}]\hookrightarrow [G,G^{\varphi}]\twoheadrightarrow \left[G^{ab},\left(G^{ab}\right)^{\varphi} \right].  $$

By \cite[4.1.13]{Sidki},  
$$[G',G^{\varphi}] = [G,G^{\varphi}] \cap \left<G', \left(G'\right)^{\varphi} \right>,$$
which is finite, because the derived subgroup $G'$ is finite and the subgroup $\langle G',(G')^{\varphi} \rangle$ is an epimorphic image of $\chi(G')$ (Lemma \ref{lem.finite.Sidki}) and we may assume that $G$ is abelian. Hence, for all $a,b \in G$, 
$$[a^2,b^\varphi] = [a,b^\varphi]^2 \in T_{\chi}(G). $$
Since $T_{\chi}(G)$ is finite, it follows that every element $[a,b^{\varphi}]$ has finite order. We conclude that the subgroup $D(G)$ is  finite, which completes the proof. 
\end{proof}

\section{Finiteness conditions for the weak commutatity of residually finite groups}

Recall that a group $G$ is called an FC-group if every element of $G$ has a finite number of conjugates. A subset $X$ of a group is commutator-closed if $[x,y]\in X$ for any $x,y\in X$. We need the following result, due to Shumyatsky \cite{shu05}. 

\begin{lem} \label{lem.shu} 
Let $G$ be a residually finite group satisfying some non-trivial identity $f \equiv~1$. Suppose $G$ is generated by a normal commutator-closed set $X$ of $p$-elements. Then $G$ is locally finite.
\end{lem}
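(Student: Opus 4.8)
The plan is to reduce the statement to the finitely generated case and then feed the resulting group into the Lie-theoretic machinery behind Zelmanov's solution of the restricted Burnside problem (cf. \cite{ze1,ze2,ze16} and the survey \cite{shu00}). Since local finiteness is a local property, it is enough to show that every subgroup $H=\langle x_1,\dots ,x_d\rangle$ generated by finitely many elements $x_i\in X$ is finite. Such an $H$ is again residually finite and still satisfies the identity $f\equiv 1$, and the set $X\cap H$ is a normal commutator-closed set of $p$-elements generating $H$; in particular the commutator-closedness of $X$ enters decisively through the fact that every iterated group commutator $[x_{i_1},x_{i_2},\dots ,x_{i_k}]$ in the chosen generators again lies in $X$ and is therefore a $p$-element.

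First I would show that $H$ is residually-(finite $p$). Each finite quotient of $H$ is generated by the image of $X\cap H$, which is again a normal commutator-closed set of $p$-elements for the same prime $p$, and a finite group generated by a normal commutator-closed set of $p$-elements is a $p$-group. Hence $H$ embeds into its pro-$p$ completion $\widehat H$, a topologically finitely generated pro-$p$ group satisfying $f\equiv 1$, and it suffices to prove that $\widehat H$ is finite.

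Next I would pass to the restricted Lie algebra $L=L(\widehat H)$ over $\mathbb{F}_p$ associated with the Zassenhaus--Jennings--Lazard filtration of $\widehat H$; it is generated by the images $\tilde x_1,\dots ,\tilde x_d$ of the generators. By the Wilson--Zelmanov analysis of identities for Lie algebras of pro-$p$ groups, the identity $f\equiv 1$ on $\widehat H$ yields a non-trivial polynomial identity on $L$. Since each $x_i$ is a $p$-element, each $\tilde x_i$ is ad-nilpotent in $L$; and since every Lie monomial in the $\tilde x_i$ arises from a group commutator in the $x_i$, which lies in $X$ and is thus a $p$-element, every Lie monomial in the generators of $L$ is ad-nilpotent. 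Zelmanov's theorem -- a finitely generated PI Lie algebra all of whose Lie monomials in a fixed finite generating set are ad-nilpotent is nilpotent -- then gives that $L$ is nilpotent, hence finite-dimensional over $\mathbb{F}_p$. Therefore the Zassenhaus filtration of $\widehat H$ terminates after finitely many steps, so $\widehat H$, and with it $H$, is finite; as $H$ was an arbitrary subgroup generated by finitely many elements of $X$, the group $G$ is locally finite.

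The step I expect to be the main obstacle is the passage to Lie algebras together with the precise application of Zelmanov's theorem: one has to check both that the group identity genuinely linearizes to a non-trivial polynomial identity on the associated graded algebra and that commutator-closedness of $X$ is exactly what makes all Lie monomials in the generators ad-nilpotent, and the bookkeeping between the ordinary and the restricted Lie algebra structures also requires care. The group-theoretic ingredient that every finite quotient is a $p$-group, though elementary in outline, in general depends on the classification of finite simple groups.
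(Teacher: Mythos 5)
The paper itself does not prove this lemma at all: it is quoted verbatim from Shumyatsky \cite{shu05}, so the only meaningful comparison is with the argument given there, which is in the same Lie-theoretic spirit as your sketch but is organized quite differently — and for a reason. The decisive step of your proposal is false. You claim that a finite group generated by a normal commutator-closed set of $p$-elements must be a $p$-group, and you deduce that every finitely generated $H=\langle x_1,\dots,x_d\rangle$ with $x_i\in X$ is residually-$p$. A counterexample is $G=A_5\cong \mathrm{PSL}(2,5)$ with $X=C\cup\{1\}$, where $C$ is one of the two conjugacy classes of elements of order $5$: the set $X$ is normal, consists of $5$-elements, and generates $A_5$ (a nontrivial normal subset of a simple group), and it is commutator-closed. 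Indeed, two non-commuting members of $C$ fix distinct points of the projective line, so up to simultaneous conjugation they are $\pm\bigl(\begin{smallmatrix}1&r\\0&1\end{smallmatrix}\bigr)$ and $\pm\bigl(\begin{smallmatrix}1&0\\s&1\end{smallmatrix}\bigr)$ with $r,s$ nonzero squares mod $5$; the commutator then has trace $2+(rs)^2\equiv -2 \pmod 5$, and a short check of the parameter shows it lies again in $C$. Since $A_5$ is perfect, its pro-$5$ completion is trivial, so a group satisfying your hypotheses need not embed in its pro-$p$ completion, and the whole reduction to a pro-$p$ group collapses. (This has nothing to do with the classification of finite simple groups, as you hedge at the end; the statement is simply untrue, and this subtlety is precisely why Shumyatsky's actual proof cannot proceed by a residually-$p$ reduction of the naive kind.)

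There is a second, independent problem at the end of your argument. Even in situations where the pro-$p$ reduction is legitimate, nilpotency of $L=L_p(\widehat H)$ does not give finite-dimensionality, because $L$ is finitely generated only as a \emph{restricted} Lie algebra: for $\widehat H=\mathbb{Z}_p$ the algebra associated with the Zassenhaus filtration is abelian (hence nilpotent) yet infinite-dimensional, and the dimension subgroups never terminate, so "$L$ nilpotent $\Rightarrow$ the filtration stops $\Rightarrow$ $\widehat H$ finite'' is not a valid inference. The standard route (as in Wilson and Shumyatsky's applications of Zelmanov's theorem) is: $L_p(\widehat H)$ nilpotent implies $\widehat H$ is $p$-adic analytic by Lazard's criterion, hence linear, and only then does one finish by a separate argument exploiting the identity and the torsion generators; some such additional step is unavoidable, since infinite $p$-adic analytic pro-$p$ groups can be topologically generated by finitely many $p$-elements. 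So while the ingredients you name (Wilson--Zelmanov linearization of the identity, ad-nilpotency of Lie commutators coming from commutator-closedness, Zelmanov's nilpotency theorem) are indeed the right toolkit, the proof as proposed has a fatal gap at the residually-$p$ step and an incorrect final inference, and it does not reconstruct the argument of \cite{shu05}.
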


We are now in a position to prove Theorem C. 
\begin{proof}[Proof of Theorem C]
We first prove that the derived subgroup $G' = \langle [x,y] \mid x,y \in G\rangle$ is locally finite. For every $x,y \in G$ there exists a $p$-power $q=q(x,y)$ such that $[x,y^{\varphi}]^q=1$. In particular, we deduce that every commutator has finite $p$-power order. By Lemma \ref{lem.shu}, the derived subgroup $G'$ is locally finite.

Let $M$ be a finitely generated subgroup of $D(G)$. Clearly, there exist finitely many elements $a_1,\ldots,a_s$, $b_1, \ldots,b_s \in G$ such that $$M  \leqslant \langle [a_i,b_i^{\varphi}]  \mid \ i = 1,\ldots,s  \rangle = N.$$
It suffices to prove that $N$ is finite. Since the subgroup $W(G)$ is central in $D(G)$ and the factor group $D(G)/W(G)$ is isomorphic to $G'$, it follows that $N$ is a central-by-finite group. By Schur's Theorem \cite[10.1.4]{Rob}, the derived subgroup $N'$ is finite, so $N$ is an FC-group. Since the torsion set form a subgroup in FC-groups (Neumann, \cite[14.5.9]{Rob}), we deduce that $N$ is finite. Since $M$ was chosen arbitrarily, we now conclude that $D(G)$ is locally finite. The proof is complete.
\end{proof}

\begin{cor}
Let $m$ be a positive integer and $p$ a prime. Let $G$ be a residually finite group. Suppose that for every $x,y \in G$ the element $[x,y^{\varphi}]$ has order dividing $p^m$. Then $D(G)$ is locally finite. 
\end{cor}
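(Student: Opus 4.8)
The plan is to deduce this immediately from Theorem C together with the observation that the hypothesis "$[x,y^\varphi]$ has order dividing $p^m$ for all $x,y$" is a special case of the hypothesis of Theorem C, once we check that a residually finite group which is moreover assumed to have $D(G)$-commutators of bounded $p$-power order automatically satisfies a non-trivial identity. The key point is that when the orders of the relevant elements are uniformly bounded (here by $p^m$), the group is close to satisfying a law, and indeed for the conclusion we need to produce a non-trivial identity $f\equiv 1$ valid in $G$ so that Lemma \ref{lem.shu} applies.

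First I would observe that the hypothesis forces $[x,y^\varphi]^{p^m}=1$ for all $x,y\in G$, so in particular every $q=q(x,y)$ in Theorem C can be taken to be the fixed $p$-power $p^m$. Next, as in the proof of Theorem C, every commutator $[x,y]$ of $G$ has order dividing $p^m$, since $D(G)/W(G)\cong G'$ and $W(G)$ is central in $D(G)$; hence $G'$ is a group of finite exponent dividing $p^m$ (it is generated by elements of order dividing $p^m$ lying in an... more carefully: one argues as in Theorem C that each commutator of $G$ has bounded $p$-power order). Then $G$ satisfies the non-trivial identity $[x_1,x_2]^{p^m}\equiv 1$. With this identity in hand, the group $G$ now satisfies exactly the hypotheses of Theorem C: it is residually finite, it satisfies a non-trivial identity, and for every $x,y\in G$ we have $[x,y^\varphi]^{p^m}=1$ with $p^m$ a $p$-power.

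Applying Theorem C directly, we conclude that $D(G)$ is locally finite, which is the assertion of the Corollary. The only step requiring a small argument is the verification that the commutators of $G$ (not merely the elements $[x,y^\varphi]$ of $\chi(G)$) have bounded exponent, so that the needed identity is available; this is immediate from the exact description $D(G)/W(G)\cong G'$ together with centrality of $W(G)$ in $D(G)$, exactly as used already in the proof of Theorem C, so no real obstacle arises. Thus the corollary is an essentially formal consequence of Theorem C.
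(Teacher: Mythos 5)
Your proposal is correct and follows essentially the same route as the paper: deduce from the hypothesis (via the projection of $\chi(G)$ onto $G$, equivalently $D(G)/W(G)\cong G'$) that every commutator $[x,y]$ has order dividing $p^m$, so that $G$ satisfies the non-trivial identity $[x,y]^{p^m}\equiv 1$, and then apply Theorem C with $q=p^m$. No gap to report.
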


\begin{proof}
We first show that the group $G$ satisfies a non-trivial identity. For every $x,y \in G$ the element $[x,y^{\varphi}]$ has order dividing $p^m$. In particular, we deduce that every commutator $[x,y]$ has order dividing $p^m$ and so, the group $G$ satisfies the identity $$ f=[x,y]^{p^{m}}\equiv1.$$  
Applying Theorem C to $D(G)$, we deduce that $D(G)$ is locally finite, as well. \end{proof}

\begin{rem}
Note that the finiteness of $D(G)$ does not imply the finiteness of the group $G$. For instance, if $G = C_{\infty}$, then $D(G)$ is trivial and $\chi(G) \cong C_{\infty} \times C_{\infty}$.
\end{rem}

The next lemma is taken from \cite{BMTT}. 

\begin{lem} (\cite[Theorem A]{BMTT}) \label{lem.BMTT}
Let $G$ be a residually finite group satisfying a non-trivial identity. Suppose that $G$ is generated by a commutator-closed set $X$ of bounded Engel elements. Then $G$ is locally nilpotent.
\end{lem}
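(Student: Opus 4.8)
The plan is to follow the Lie-theoretic strategy of Wilson and Zelmanov, reducing the statement to a nilpotency theorem for Lie algebras satisfying a polynomial identity. First I would reduce to the finitely generated case. Since local nilpotency is witnessed by finitely generated subgroups, and every such subgroup of $G=\langle X\rangle$ lies in $\langle x_1,\dots,x_m\rangle$ for finitely many $x_i\in X$, it suffices to prove that $K=\langle x_1,\dots,x_m\rangle$ is nilpotent. The key point, which is exactly what commutator-closedness buys, is that $X\cap K$ is again commutator-closed (if $a,b\in X\cap K$ then $[a,b]\in X$ and $[a,b]\in K$), generates $K$, and consists of bounded Engel elements; hence \emph{every left-normed commutator in the generators $x_1,\dots,x_m$ is a bounded left Engel element of $K$}. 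Thus I may assume $G$ is finitely generated with this property, still residually finite and still satisfying the identity $f\equiv 1$.

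Next I would show that every finite quotient of $G$ is nilpotent. In a finite quotient $\bar G$ the image of each element of $X$ is a left Engel element (being $n$-Engel passes to quotients), and by Baer's theorem the left Engel elements of a finite group all lie in the Fitting subgroup. Hence $\bar G=\langle \bar X\rangle\leqslant F(\bar G)$ is nilpotent. Consequently $G$ is residually nilpotent, so $\bigcap_i\gamma_i(G)=1$, and it now suffices to prove that the lower central series of $G$ terminates, equivalently that the associated Lie ring $L=\bigoplus_i\gamma_i(G)/\gamma_{i+1}(G)$ is nilpotent.

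The heart of the argument is to establish nilpotency of the associated Lie algebra via Zelmanov's machinery. Working one prime at a time, I would pass to the restricted Lie algebra $L_p$ over $\mathbb{F}_p$ attached to the Zassenhaus--Jennings--Lazard series of the pro-$p$ completion, and use two deep inputs: first, since $G$ satisfies a non-trivial identity, the Lie algebra $L_p$ satisfies a non-trivial polynomial identity (this is precisely where the Wilson--Zelmanov theorem on identities of Lie algebras of pro-$p$ groups is invoked); and second, since every commutator in the generators is a bounded Engel element of $G$, its image in $L_p$ is ad-nilpotent, so $L_p$ is generated by finitely many elements all of whose commutators are ad-nilpotent. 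Zelmanov's nilpotency theorem then yields that $L_p$ is nilpotent, and nilpotency of the Lie algebra forces the corresponding $p$-adic image, and ultimately $G$ itself, to be nilpotent.

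I expect the main obstacle to be precisely the translation carried out in the previous paragraph: verifying that the group identity descends to a genuine polynomial identity of $L_p$, that a bounded Engel element produces an ad-nilpotent element of bounded index, and then recombining the information across all primes to conclude nilpotency of the finitely generated group $G$ (with control on the class, so that residual nilpotency upgrades to nilpotency) rather than merely of each $p$-adic image. These are exactly the points at which Zelmanov's solution of the Restricted Burnside Problem and the Wilson--Zelmanov identity theorem are indispensable; everything preceding them is elementary reduction.
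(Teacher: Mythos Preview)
The paper does not prove this lemma at all: it is quoted verbatim as Theorem~A of \cite{BMTT} and used as a black box in the proof of Theorem~D. So there is no ``paper's proof'' to compare against; your outline is a reconstruction of the argument in \cite{BMTT}, not of anything in the present paper.

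That said, your sketch is faithful to the Wilson--Zelmanov--Shumyatsky strategy that \cite{BMTT} employs: reduce to a finitely generated subgroup $K=\langle x_1,\dots,x_m\rangle$ with $x_i\in X$; use commutator-closedness of $X$ to ensure that every left-normed commutator in the $x_i$ is again a bounded Engel element; invoke Baer's theorem to see that every finite quotient is nilpotent, hence $K$ is residually a finite nilpotent group; pass to the associated $\mathbb{F}_p$-Lie algebra $L_p(K)$, where the group identity yields a polynomial identity (Wilson--Zelmanov) and the bounded Engel condition on commutators in generators yields ad-nilpotency of those commutators; then apply Zelmanov's theorem to conclude $L_p(K)$ is nilpotent and pull this back to nilpotency of $K$. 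You correctly flag the genuinely delicate points---that the identity transfer to $L_p$ is the Wilson--Zelmanov theorem, that bounded Engel gives ad-nilpotency of bounded index, and that one must assemble the conclusions over the finitely many relevant primes. These are exactly the places where the heavy machinery is invoked in \cite{BMTT}; everything else is routine reduction, as you say.
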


\begin{thmD}
Let $G$ be a residually finite group satisfying some non-trivial identity. Assume that for every $x,y \in G$ the element  $[x,y^{\varphi}]$ is a bounded Engel element in $\chi(G)$. Then $D(G)$ is locally nilpotent.  
\end{thmD}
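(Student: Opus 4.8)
The plan is to mirror the structure of the proof of Theorem C, replacing the local finiteness inputs by their local nilpotency analogues. First I would show that the derived subgroup $G'$ is locally nilpotent. Indeed, the hypothesis that $[x,y^{\varphi}]$ is a bounded Engel element in $\chi(G)$ for all $x,y\in G$ forces, via the retraction $\chi(G)\twoheadrightarrow G\times G$ (which sends $[x,y^{\varphi}]\mapsto 1$) — no, more usefully via the retraction onto the first factor $g\mapsto g,\ g^{\varphi}\mapsto 1$ — that each commutator $[x,y]$ is a bounded Engel element of $G$: conjugating $[x,y^{\varphi}]$ by elements of $\chi(G)$ and applying the homomorphism, any instance of $[w,_n [x,y^{\varphi}]]=1$ projects to $[\bar w,_n [x,y]]=1$. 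Since $G$ is residually finite and satisfies a non-trivial identity, the commutator-closed set $\{[x,y]\mid x,y\in G\}$ generates $G'$, and Lemma \ref{lem.BMTT} then yields that $G'$ is locally nilpotent.

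Next I would reduce to a finitely generated subgroup. Let $M\leqslant D(G)$ be finitely generated; then $M\leqslant N:=\langle [a_i,b_i^{\varphi}]\mid i=1,\dots,s\rangle$ for suitable $a_i,b_i\in G$, and it suffices to prove $N$ is nilpotent. Because $W(G)$ is central in $D(G)$ and $D(G)/W(G)\cong G'$, the image $\bar N$ of $N$ in $G'$ is a finitely generated subgroup of the locally nilpotent group $G'$, hence nilpotent, say of class $c$. Thus $N$ is nilpotent-by-(central), i.e. $N$ has a central subgroup $N\cap W(G)$ with $N/(N\cap W(G))$ nilpotent of class $c$; this already gives $N$ nilpotent of class at most $c+1$, since $\gamma_{c+1}(N)\leqslant N\cap W(G)\leqslant Z(N)$ forces $\gamma_{c+2}(N)=1$.

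The point above is clean precisely because $W(G)$ is central in $D(G)$, so in fact the Engel hypothesis is only needed to get $G'$ locally nilpotent, and the rest is the standard ``nilpotent-by-central is nilpotent'' observation. I would therefore present it in that order: (i) project the Engel condition down to $G'$; (ii) invoke Lemma \ref{lem.BMTT} to conclude $G'$ is locally nilpotent; (iii) for finitely generated $N\leqslant D(G)$, use centrality of $W(G)$ in $D(G)$ together with nilpotency of the finitely generated subgroup $\bar N\leqslant G'$ to bound the nilpotency class of $N$; (iv) conclude $D(G)$ is locally nilpotent.

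The main obstacle I anticipate is step (i): making rigorous that $[x,y^{\varphi}]$ being a bounded Engel element of $\chi(G)$ implies $[x,y]$ is a bounded Engel element of $G$. One must check that the relevant Engel relations survive the projection $\pi:\chi(G)\to G$, $g\mapsto g$, $g^{\varphi}\mapsto g$ (or the projection onto the first direct factor); since $\pi$ is surjective and $\pi([x,y^{\varphi}])=[x,y]$, any relation $[w,_n[x,y^{\varphi}]]=1$ valid for all $w\in\chi(G)$ maps to $[\pi(w),_n[x,y]]=1$ for all $\pi(w)\in G$, with the same bound $n$, so that $[x,y]$ is indeed $n$-Engel in $G$ with $n$ independent of the element of $G$ being tested. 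Once this is in place the remaining arguments are routine, and the restriction to ``bounded'' (rather than arbitrary) Engel elements is exactly what is needed so that Lemma \ref{lem.BMTT} applies.

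\begin{proof}[Proof of Theorem D]
We first prove that the derived subgroup $G'=\langle [x,y]\mid x,y\in G\rangle$ is locally nilpotent. Consider the retraction $\pi:\chi(G)\to G$ determined by $g\mapsto g$ and $g^{\varphi}\mapsto g$; it is an epimorphism and $\pi([x,y^{\varphi}])=[x,y]$ for all $x,y\in G$. Fix $x,y\in G$ and let $n=n(x,y)$ be such that $[x,y^{\varphi}]$ is $n$-Engel in $\chi(G)$, that is, $[w,_n [x,y^{\varphi}]]=1$ for every $w\in\chi(G)$. Applying $\pi$ and using surjectivity, we obtain $[u,_n [x,y]]=1$ for every $u\in G$. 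Hence every commutator $[x,y]$ is a bounded (left) Engel element of $G$. Since $\{[x,y]\mid x,y\in G\}$ is a commutator-closed set generating $G'$, and since $G'$ is residually finite and satisfies a non-trivial identity (being a subgroup of $G$), Lemma \ref{lem.BMTT} applies and $G'$ is locally nilpotent.

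Let $M$ be a finitely generated subgroup of $D(G)$. There exist finitely many elements $a_1,\ldots,a_s,b_1,\ldots,b_s\in G$ with
$$M\leqslant \langle [a_i,b_i^{\varphi}]\mid i=1,\ldots,s\rangle = N.$$
It suffices to prove that $N$ is nilpotent. Since $W(G)$ is central in $D(G)$ and $D(G)/W(G)\cong G'$, the image $\bar N$ of $N$ in $G'$ is a finitely generated subgroup of the locally nilpotent group $G'$, hence nilpotent, say $\gamma_{c+1}(\bar N)=1$ for some $c$. Therefore $\gamma_{c+1}(N)\leqslant N\cap W(G)$, which is contained in the center of $N$, so $\gamma_{c+2}(N)=[\gamma_{c+1}(N),N]=1$. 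Thus $N$ is nilpotent of class at most $c+1$, and in particular $M$ is nilpotent. Since $M$ was arbitrary, $D(G)$ is locally nilpotent. The proof is complete.
\end{proof}
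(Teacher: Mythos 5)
Your proof is correct and follows essentially the same route as the paper: project the Engel hypothesis through the retraction $\chi(G)\to G$ to see that commutators are bounded Engel in $G$, apply Lemma \ref{lem.BMTT} to the commutator-closed set of commutators to get $G'$ locally nilpotent, and then use that $W(G)$ is central in $D(G)$ with $D(G)/W(G)\cong G'$. The only difference is that you spell out the reduction (finitely generated $N\leqslant D(G)$ is central-by-nilpotent, hence nilpotent of class at most $c+1$) which the paper compresses into the single sentence ``it suffices to prove that $G'$ is locally nilpotent.''
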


\begin{proof}
Since $W(G)$ is a central subgroup of $D(G)$ and $D(G)/W(G)$ is isomorphic to the derived subgroup $G'$, it suffices to prove that $G'$ is locally nilpotent.  

For every $x,y \in G$ there exists a positive integer $n=n(x,y)$ such that the element $[x,y^{\varphi}]$ is $n$-Engel in $\chi(G)$. In particular, we deduce that for every $x,y \in G$ the commutator $[x,y]$ is a bounded Engel (in $G$). By Lemma \ref{lem.BMTT}, the derived subgroup $G'$ is locally nilpotent. The proof is complete. 
\end{proof}

\begin{cor}
Let $n$ be a positive integer. Let $G$ be a residually finite group. Assume that for every $x,y \in G$ the element $[x,y^{\varphi}]$ is $n$-Engel (in $\chi(G)$). Then $D(G)$ is locally nilpotent.   
\end{cor}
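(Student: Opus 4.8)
The plan is to obtain this statement as a direct specialization of Theorem D. The only point that genuinely needs an argument is that $G$ satisfies a non-trivial identity; once that is established, all the hypotheses of Theorem D are in force and the conclusion is immediate.

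First I would transport the Engel condition from $\chi(G)$ down to $G$ along the canonical epimorphism $\pi \colon \chi(G) \twoheadrightarrow G$ given by $g \mapsto g$, $g^{\varphi} \mapsto g$ (with kernel $L(G)$). Since $\pi([x,y^{\varphi}]) = [x,y]$ for all $x,y \in G$ and $\pi$ is surjective, the hypothesis that each $[x,y^{\varphi}]$ is $n$-Engel in $\chi(G)$ forces $[w,_n[x,y]] = 1$ for all $w,x,y \in G$; in other words every commutator $[x,y]$ is an $n$-Engel element of $G$, so $G$ satisfies the law $[x_3,_n [x_1,x_2]] \equiv 1$.

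Next I would verify that this law is non-trivial, which is the one step requiring a (routine) argument. In the free group $F$ on $\{a,b,c\}$, the element $c$ lies outside the derived subgroup while $[a,b]$ lies inside it, so $c$ and $[a,b]$ have no common root and hence generate a free subgroup of rank $2$; consequently $[c,_n[a,b]] \neq 1$ for every $n \geq 1$. Thus $[x_3,_n[x_1,x_2]]$ is a non-trivial word, and $G$ satisfies a non-trivial identity.

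Finally, every $n$-Engel element is in particular a bounded (left) Engel element, so for every $x,y \in G$ the element $[x,y^{\varphi}]$ is a bounded Engel element of $\chi(G)$. Therefore $G$ is a residually finite group satisfying a non-trivial identity in which every $[x,y^{\varphi}]$ is bounded Engel in $\chi(G)$, and Theorem D yields that $D(G)$ is locally nilpotent. I expect no real obstacle here: the substance is already contained in Theorem D, and the only thing to be careful about is the non-triviality of the derived law.
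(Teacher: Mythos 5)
Your proposal is correct and follows essentially the same route as the paper: deduce from the hypothesis that every commutator $[x,y]$ is $n$-Engel in $G$, so that $G$ satisfies the identity $[z,_n[x,y]]\equiv 1$, and then apply Theorem D. You merely make explicit two details the paper leaves implicit (transporting the Engel condition along the epimorphism $\chi(G)\twoheadrightarrow G$ and checking the non-triviality of the Engel law in a free group), both of which are handled correctly.
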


\begin{proof}
We first prove that $G$ satisfies a non-trivial identity. Since every element $[x,y^{\varphi}]$ is $n$-Engel (in $\chi(G)$), it follows that every commutator $[x,y]$ is $n$-Engel (in $G$). In particular, the group $G$ satisfies the identity $$f=[z,_{n}[x,y]] \equiv 1.$$ Applying Theorem D for $D(G)$, we obtain that $D(G)$ is locally nilpotent. The proof is complete.   
\end{proof}

\noindent {\bf Acknowledgments.} We thank N.\,R. Rocco for collaborating with us on this project. This work was partially supported by FAPDF - Brazil, Grant: 0193.001344/2016.

\end{document}